\newtheorem{theorem}{{\sc Theorem}}[section]
\newtheorem{lemma}{{\sc Lemma}}[section]
\newtheorem{definition}{{\sc Definition}}[section]
\begin{document}

\author{Tigran Hakobyan}
\title{\textbf {On the unboundedness of common divisors of distinct terms of the sequence $a_n=2^{2^n}+d$ for $d>1$}} 
 \date{}
\maketitle

\begin{flushleft}
\textbf{\ \ \  Abstract.} \ It is well-known that for any distinct positive integers $k$ and $n$, the numbers $2^{2^k}+1$ and $2^{2^n}+1$ are relatively prime. In this paper we consider the situation when 1 is replaced by some positive integer $d>1$.
\end{flushleft}
\subsection{Introduction}
Consider the sequence $a_n=2^{2^n}+1, n\in \mathbb N$. Since $a_n=\prod_{i=0}^{n-1}a_i+2$, we therefore have that gcd$(a_n,a_m)=1$ 
for any unequal positive integers $m$ and $n$. In this respect a natural question arises: is this propery preserved if we replace 1 by some positive integer $d>1$. 
The main result of this paper is the following theorem

\begin{theorem}
For any positive integers $d>1$ and $m$ there exist two distinct elements $a_k$ and $a_l$ in the sequence $a_n=2^{2^n}+d, n\in \mathbb N$, such that  gcd$(a_k,a_l)>m.$ 
\end{theorem}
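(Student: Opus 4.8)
The plan is to analyze the common prime divisors of two terms through the simple congruence that the sequence satisfies modulo one of its own terms. Fix $k<l$ and set $j=l-k$. Since $2^{2^k}\equiv -d\pmod{a_k}$, squaring $j$ times gives $2^{2^{k+j}}=(2^{2^k})^{2^j}\equiv(-d)^{2^j}\pmod{a_k}$, so that $a_l=2^{2^l}+d\equiv(-d)^{2^j}+d\pmod{a_k}$, and hence $\gcd(a_k,a_l)=\gcd\!\big(a_k,\,(-d)^{2^j}+d\big)$. Reading this modulo a prime $p\nmid 2d$ with $p\mid a_k$, I find that $p\mid a_l$ if and only if $(-d)^{2^j}\equiv-d$, i.e. $(-d)^{2^j-1}\equiv1\pmod p$. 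Because $2^j-1$ is odd, this is solvable in $j$ exactly when the multiplicative order of $-d$ modulo $p$ is odd. This yields a clean dichotomy: an odd prime $p\nmid d$ divides two distinct terms (equivalently, infinitely many terms) if and only if $-d$ has odd order modulo $p$; I will call such a $p$ \emph{good}. A prime for which $-d$ has even order divides at most one term.

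With this dichotomy, the theorem reduces to a single statement: there exist arbitrarily large good primes dividing some term. Indeed, given a good $p>m$ with $p\mid a_n$, goodness produces $j\ge1$ with $p\mid a_{n+j}$, whence $\gcd(a_n,a_{n+j})\ge p>m$. To recognize good primes among the factors of the terms, I would use that every prime divisor of $a_n$ already makes $-d$ a square: from $2^{2^n}\equiv-d\pmod p$ with $n\ge1$, the element $-d=(2^{2^{n-1}})^2$ is a quadratic residue. More precisely, writing $s=v_2(p-1)$, if $s\le n$ then $-d\equiv(2^{2^{n-s}})^{2^s}$ is a $2^s$-th power, hence lies in the subgroup of odd order, so $p$ is automatically good. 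In particular every prime $p\equiv3\pmod4$ dividing $a_n$ with $n\ge1$ is good.

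The crux is therefore to show that the terms $a_n$ ($n\ge1$) have arbitrarily large good prime factors — most simply, infinitely many prime factors $p\equiv3\pmod4$, or more generally factors with $v_2(p-1)\le n$. My plan is a descent/Euclid-type argument: suppose, for contradiction, that every good prime dividing the sequence is $\le m$. Then for each large $n$, every prime $p>m$ dividing $a_n$ has $v_2(p-1)>n$, i.e. $p\equiv1\pmod{2^{n+1}}$; multiplying these together shows that the part of $a_n$ built from primes $>m$ is $\equiv1\pmod{2^{n+1}}$. Comparing with $a_n\equiv d\pmod{2^{n+1}}$ (valid since $2^{n+1}\mid 2^{2^n}$ for $n\ge1$) forces the $m$-smooth part of $a_n$ to satisfy $\equiv d\pmod{2^{n+1}}$, a congruence that grows more rigid as $n\to\infty$. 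I expect the main obstacle to lie exactly here: turning this rigidity into a contradiction, i.e. ruling out that $a_n$ is, for all large $n$, supported only on the fixed finite set of small good primes together with primes $\equiv1\pmod{2^{n+1}}$. I would handle it by a case analysis on $d\bmod 4$: when $d\equiv3\pmod4$ each $a_n\equiv3\pmod4$ already forces a prime factor $\equiv3\pmod4$, after which a coprimality step produces a genuinely new such prime once finitely many are excluded; the cases $d\equiv1\pmod4$ and even $d$ (via the odd part of $a_n$) I would treat by the same smooth-part argument. Establishing this infinitude of good prime divisors is both the heart of the proof and the step requiring the most care.
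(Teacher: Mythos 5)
Your opening two paragraphs are correct, and they are in essence the paper's Lemmas 1--2 in a cleaner, unconditional form: your good/bad dichotomy via the order of $-d$, and the fact that $v_2(p-1)\le n$ forces goodness for any odd $p\mid a_n$, are the contrapositive of the paper's statement that any prime $p>m$ dividing $a_n$ must satisfy $p\equiv 1\pmod{2^n}$ once all pairwise gcds are assumed $\le m$. The smooth-part congruence you then set up is also the paper's next move. But the proof stops exactly where you say it does, and the routes you sketch for the crux both fail. The strategic error is that by reducing to the unconditional statement ``arbitrarily large good primes divide some term'' and negating only that, you discard the information that makes the rigidity argument close: the theorem's own contradiction hypothesis bounds every multiplicity $\nu_p(a_n)$ (if $p^t$ divides two distinct terms, then $p^t\le\gcd\le m$), hence bounds the $m$-smooth part of $a_n$ by a fixed constant $M$; then the congruence (smooth part) $\equiv d\pmod{2^n}$ with both sides below $2^n$ forces the smooth part to \emph{equal} $d$ for large $n$, whence $d\mid a_n$, so $d\mid 2^{2^n}$ and $d$ is a power of $2$. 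Under your weaker hypothesis the smooth part need not be bounded at all: for $d=2$ the prime $3$ is good and divides $a_n$ with unbounded multiplicity (by lifting the exponent, $\nu_3\bigl(2^{2^n}+2\bigr)=2+t$ along $n=2\cdot 3^t$), so the congruence modulo $2^{n+1}$ loses all force and your ``rigidity'' cannot be converted into a contradiction as framed.

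Your fallback sketches do not repair this. For $d\equiv 3\pmod 4$, knowing that each $a_n$ has a (good) prime factor $\equiv 3\pmod 4$ yields nothing: unlike the Fermat-number situation, the terms here are not pairwise coprime, and a single small prime can divide every term (if $d\equiv 2\pmod 3$, e.g.\ $d=11$, then $3\mid a_n$ for all $n$), so excluding finitely many primes never forces a ``genuinely new'' one. Worse, the case you defer --- $d$ even --- is precisely where any smooth-part argument must stall, because for $d=2^k$ the conclusion ``smooth part $=d$'' is not absurd by itself; a separate mechanism is needed. The paper supplies it as its Lemma 4: for $d=2^k$ and $n>\nu_2(k)$ one chooses $l>n$ with $(2^n-k)\mid(2^l-k)$ and odd quotient, so that $2^{2^n-k}+1$ divides $2^{2^l-k}+1$ (the classical fact $x+1\mid x^b+1$ for odd $b$) and hence $a_n\mid a_l$, making $\gcd(a_n,a_l)=a_n$ unbounded --- the final contradiction. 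Nothing in your plan produces this divisibility, and it cannot be reached by congruence rigidity alone; this missing endgame, together with the discarded multiplicity bound, is what separates your setup from a complete proof.
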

\subsection{Proof of the theorem}
\begin{definition}
We define $\nu_{p}(m)=\max\{k:p^k|m\}$ for any integer $m$ and prime number $p$.
\end{definition}
\begin{proof}
Suppose to the contrary that there are positive integers $d>1$ and $m$ such that gcd$(a_k,a_l)\leq m$ for any distinct $k$ and $l$. It follows that if for some $t,k,l$ ($k\neq l$) $p^t|a_k$ and $p^t|a_l$ then $p^t\leq $gcd$(a_k,a_l)\leq m$, which shows that for any prime $p$ the sequence $(\nu_p(a_n))_{n\in\mathbb N}$ is bounded. Let us now prove several lemmas. \newline
\begin{lemma}
For any positive integers $n$ and $k$ satisfying $\nu_{2}(k)<n$ there exists some positive integer $l>n$ such that $(2^l-2^n)\vdots k$
(here $m\vdots n$ means $n$ divides $m$).
\end{lemma}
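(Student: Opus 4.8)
The plan is to reduce the divisibility to a congruence modulo the odd part of $k$ and then invoke Euler's theorem. First I would write $k = 2^a b$, where $a = \nu_2(k)$ and $b$ is odd, so that the hypothesis reads $a < n$. For any candidate $l > n$ I factor
$$2^l - 2^n = 2^n\bigl(2^{\,l-n} - 1\bigr),$$
and since $2^{\,l-n}-1$ is odd, the two-adic valuation of the right-hand side equals $n$. The inequality $a < n$ gives $2^a \mid 2^n$, so the $2^a$-part of $k$ divides $2^l - 2^n$ automatically; and because $b$ is odd it is coprime to $2^n$, whence $k \mid 2^l - 2^n$ holds if and only if $b \mid 2^{\,l-n}-1$.

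The remaining task concerns only the multiplicative order of $2$ modulo the odd number $b$. Since $\gcd(2,b)=1$, Euler's theorem gives $2^{\varphi(b)} \equiv 1 \pmod b$, so taking $l-n$ to be any positive multiple of $\varphi(b)$ — for instance $l = n + \varphi(b)$ — produces $b \mid 2^{\,l-n}-1$ and therefore $k \mid 2^l - 2^n$. The condition $l > n$ is automatic as $\varphi(b) \ge 1$, and the degenerate case $b=1$ (that is, $k$ a power of $2$) is covered too, since there $2^{\,l-n}-1$ is trivially divisible by $b$ for every $l>n$.

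I do not anticipate a real obstacle. The single point that must be handled with care is the use of the hypothesis $\nu_2(k) < n$: it is precisely this inequality that guarantees $2^a \mid 2^n$ and thereby lets the power-of-two part of $k$ take care of itself, leaving only the standard fact that an odd modulus divides $2^t-1$ whenever $t$ ranges over the multiples of the order of $2$ modulo that modulus.
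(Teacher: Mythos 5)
Your proof is correct and follows essentially the same route as the paper: the decomposition $k=2^ab$ with $a=\nu_2(k)<n$, the choice $l=n+\phi(b)$, and Euler's theorem $b\mid 2^{\phi(b)}-1$ are exactly the paper's argument, just spelled out in more detail. No issues.
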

\begin{proof}
Suppose $k=2^a b$, where $a<n$ and $b$ is odd. Since $b|2^{\phi(b)}-1$ and $a<n$, we infer that $(2^{n+\phi(b)}-2^n)\vdots 2^n b\vdots 2^a b=k$. The lemma is proved.
\end{proof}
\begin{lemma}
If for some prime $p>m$ and positive integer $n$ we have that $p|a_n$, then $p\equiv 1(mod \ 2^n)$.
\end{lemma}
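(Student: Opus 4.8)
The plan is to translate the divisibility $p\mid a_n$ into a statement about the multiplicative order of $2$ modulo $p$, and then invoke Lemma 1.1 to manufacture a \emph{second} index divisible by $p$, which is forbidden. The engine behind the whole argument is the following consequence of the contradiction hypothesis: since $\gcd(a_k,a_l)\le m$ for all distinct $k,l$, every prime $p>m$ divides \emph{at most one} term of the sequence, for $p\mid a_k$ and $p\mid a_l$ with $k\ne l$ would give $p\mid\gcd(a_k,a_l)\le m$, i.e. $p\le m$. So the task reduces to showing that this uniqueness forces the congruence $p\equiv 1\pmod{2^n}$.

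First I would fix notation: let $e=\operatorname{ord}_p(2)$ be the multiplicative order of $2$ modulo $p$ (note $p$ is odd, since for even $d$ one has $2\mid a_k$ for every $k$, contradicting $p>m$ via uniqueness, while for odd $d$ all $a_k$ are odd). From $p\mid a_n=2^{2^n}+d$ we read off $2^{2^n}\equiv -d\pmod p$. Because $e\mid p-1$ by Fermat's little theorem, it suffices to prove the stronger-looking divisibility $2^n\mid e$: this immediately yields $2^n\mid p-1$, i.e. $p\equiv 1\pmod{2^n}$.

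To prove $2^n\mid e$ I would argue by contradiction, assuming $\nu_2(e)<n$. This is exactly the hypothesis of Lemma 1.1 with $k=e$, so the lemma supplies an integer $l>n$ with $(2^l-2^n)\vdots e$. Since $2^e\equiv 1\pmod p$ and $2^l-2^n>0$, this gives $2^{2^l}=2^{2^n}\cdot 2^{\,2^l-2^n}\equiv 2^{2^n}\equiv -d\pmod p$, whence $p\mid 2^{2^l}+d=a_l$. But then $p$ divides both $a_n$ and $a_l$ with $l>n$ (so $l\ne n$), contradicting that $p>m$ divides at most one term. Hence $\nu_2(e)\ge n$, completing the proof.

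The step I expect to be the crux is recognizing that Lemma 1.1 is precisely engineered for this purpose: its side condition $\nu_2(k)<n$ is the negation of $2^n\mid e$, and its conclusion produces an exponent $l$ strictly larger than $n$ with $2^l\equiv 2^n\pmod e$, which is exactly what is needed to push the congruence $2^{2^n}\equiv -d$ forward to a new index. Once this matching is seen, the only remaining care is the routine bookkeeping (oddness of $p$, and $l\ne n$) needed to land the contradiction.
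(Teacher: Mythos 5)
Your proof is correct and follows essentially the same route as the paper: both arguments invoke Lemma 1.1 to manufacture a second index $l>n$ with $p\mid a_l$, contradicting the bound $\gcd(a_n,a_l)\le m<p$. The only cosmetic difference is that the paper applies Lemma 1.1 directly with $k=p-1$ (so that $p \not\equiv 1 \pmod{2^n}$ gives $\nu_2(p-1)<n$, and Fermat's little theorem yields $p\mid 2^{2^l-2^n}-1$), whereas you route through the order $e=\operatorname{ord}_p(2)$ and then use $e\mid p-1$; the two are interchangeable, and your version even notes the oddness of $p$, which the paper leaves implicit.
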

\begin{proof}
Suppose $p\neq 1(mod \ 2^n)$. By lemma 1 there exists some $l>n$, such that $(2^l-2^n)\vdots (p-1)$. Hence $a_l-a_n=2^{2^l}-2^{2^n}=2^{2^n}(2^{2^l-2^n}-1)\vdots 2^{2^n}(2^{p-1}-1)\vdots p$. Since $p|a_n$, we can infer that $p|a_{l}$, which implies $p\leq(a_n,a_{l})\leq m$, a contradiction to the conditions of the lemma. The lemma is proved.
\end{proof}
\begin{lemma}
 $d$ is a power of 2.
\end{lemma}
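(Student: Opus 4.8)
The plan is to decompose each $a_n$ according to the size of its prime factors and then compare two expressions for $a_n$ modulo $2^n$. For each $n$ I would write $a_n = S_n L_n$, where $S_n = \prod_{p\le m} p^{\nu_p(a_n)}$ collects the prime-power factors with $p\le m$ and $L_n = a_n/S_n$ is built entirely from the prime factors exceeding $m$.

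First I would check that $S_n$ stays bounded as $n$ varies; this is exactly where the standing contradiction hypothesis is used. For each of the finitely many primes $p\le m$ the sequence $(\nu_p(a_n))_n$ is bounded, say $\nu_p(a_n)\le B_p$, so $S_n \le \prod_{p\le m} p^{B_p} =: C$ for every $n$, with $C$ independent of $n$.

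The congruence input comes next. By Lemma 2.2 every prime factor of $L_n$ is $\equiv 1 \pmod{2^n}$, hence $L_n \equiv 1 \pmod{2^n}$ and therefore $a_n \equiv S_n \pmod{2^n}$. On the other hand $2^{2^n}\equiv 0 \pmod{2^n}$ (since $2^n\ge n$), so directly $a_n \equiv d \pmod{2^n}$. Comparing these gives $S_n \equiv d \pmod{2^n}$. Now I would let $n$ grow until $2^n > \max(C,d)$: then $S_n$ and $d$ are integers in $\{1,\dots,2^n-1\}$ congruent modulo $2^n$, forcing $S_n = d$. Since $S_n \mid a_n$, we obtain $d \mid a_n = 2^{2^n}+d$, whence $d \mid 2^{2^n}$, and therefore $d$ is a power of $2$, as claimed.

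I expect the only delicate point to be the uniform bound on $S_n$; once the boundedness of the exponents $\nu_p(a_n)$ for $p\le m$ is in hand, the remainder is a short congruence comparison. The conceptual heart is Lemma 2.2, which confines all large prime factors of $a_n$ to the single residue class $1 \pmod{2^n}$ and thereby makes the small-prime part $S_n$ congruent to $d$ modulo a power of $2$ that eventually exceeds both quantities.
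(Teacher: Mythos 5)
Your proof is correct and follows essentially the same route as the paper: the paper likewise splits $a_n = 2^{k_n} b_n c_n$ into its small-prime part (your $S_n$) and large-prime part (your $L_n$), uses Lemma 2.2 to get $c_n \equiv 1 \pmod{2^n}$, compares with $a_n \equiv d \pmod{2^n}$, and invokes boundedness of the small-prime part to force $2^{k_n}b_n = d$ for large $n$, hence $d \mid 2^{2^n}$. Your handling of primes $p \le m$ versus $p > m$ is in fact marginally cleaner than the paper's split at "less than $m$", but the argument is the same.
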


\begin{proof}
For any positive integer $n$, let $a_n=2^{k_n}b_n c_n$, where $b_n$ contains only odd prime divisors of $a_n$, which are less than $m$. If there is no such prime divisor we define $b_n=1$. It follows from lemma 2 that $c_n\equiv 1(mod \ 2^n)$ hence $a_n\equiv2^{k_n}b_n (mod \ 2^n)$. On the other hand $a_n\equiv d (mod \ 2^n)$, therefore $2^{k_n}b_n\equiv d (mod \ 2^n)$. Since the number of primes less than $m$ is finite and  $(\nu_{p}(a_n))_{n\in N}$ is bounded for any prime $p$, there is a positive integer $M$, such that $2^{k_n}b_n\leq M$ for any positive integer $n$. In this way we get that $2^{k_n}b_n=d$ for sufficiently large $n$. From this we infer that $d|a_n=2^{2^n}+d$ and so $d|2^{2^n}$, which exactly means  that $d$ is a power of 2. The lemma is proved.
\end{proof}

\begin{lemma}
For sufficiently large positive integer $n$ there exists a positive integer  $l>n$, such that $a_l\vdots a_n$.
\end{lemma}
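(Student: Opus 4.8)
The plan is to invoke Lemma~3 to write $d=2^s$ with $s\ge 1$, and then to exploit the multiplicative structure this forces on the $a_n$. For every $n$ large enough that $2^n>s$ we may pull out the exact power of $2$, obtaining $a_n=2^{2^n}+2^s=2^s\bigl(2^{2^n-s}+1\bigr)$ with the second factor odd; the same holds for $a_l$ as soon as $l>n$. Since both numbers then have $2^s$ as their exact $2$-part, the desired $a_l\vdots a_n$ is \emph{equivalent}, after cancelling the common factor $2^s$, to $\bigl(2^{2^n-s}+1\bigr)\mid\bigl(2^{2^l-s}+1\bigr)$. Thus it suffices to produce $l>n$ making this last divisibility hold.

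For this I would recall the elementary identity $x^q+1=(x+1)(x^{q-1}-x^{q-2}+\cdots+1)$, valid for odd $q$, so that $x+1\mid x^q+1$ whenever $q$ is odd. Taking $x=2^{2^n-s}$, it follows that if $2^l-s$ is an \emph{odd} multiple of $2^n-s$, say $2^l-s=(2^n-s)q$ with $q$ odd, then $2^{2^n-s}+1$ divides $2^{(2^n-s)q}+1=2^{2^l-s}+1$, which is exactly what we need. So the whole statement reduces to finding $l>n$ for which $2^l-s$ is an odd multiple of $2^n-s$.

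Writing $2^l-s=(2^l-2^n)+(2^n-s)$, the quotient $(2^l-s)/(2^n-s)$ equals $1+(2^l-2^n)/(2^n-s)$, which is an odd integer precisely when $2(2^n-s)\mid 2^l-2^n$. Hence the final step is to find $l>n$ with $2(2^n-s)\mid 2^l-2^n$, and here Lemma~1 applies directly with $k=2(2^n-s)$: it furnishes such an $l$ provided $\nu_2(k)<n$. This hypothesis is the one thing to verify, and it is harmless for large $n$, since $k=2^{n+1}-2s$ gives $\nu_2(k)=1+\nu_2(s)$, which is below $n$ once $n$ exceeds $1+\log_2 s$. Assembling the three steps yields, for all sufficiently large $n$, an $l>n$ with $a_l\vdots a_n$. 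The only genuine obstacle is the parity bookkeeping: one must carry the extra factor of $2$ in the modulus so that Lemma~1 returns an \emph{odd} quotient rather than merely a multiple, as an even quotient would fail to force $2^{2^n-s}+1\mid 2^{2^l-s}+1$.
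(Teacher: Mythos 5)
Your proof is correct and is essentially the paper's own argument: write $d=2^s$ by Lemma~3, factor $a_n=2^s\bigl(2^{2^n-s}+1\bigr)$, reduce the claim to $\bigl(2^{2^n-s}+1\bigr)\mid\bigl(2^{2^l-s}+1\bigr)$ via the identity $x+1\mid x^q+1$ for odd $q$, and use Lemma~1 to make $(2^l-s)/(2^n-s)$ an odd integer. The only difference is the parity bookkeeping: the paper applies Lemma~1 with modulus $2^n-s$ and then deduces oddness of the quotient from $\nu_2(2^l-s)=\nu_2(s)=\nu_2(2^n-s)$, whereas you apply it with the doubled modulus $2(2^n-s)$ so that the quotient is $1$ plus an even integer; both verifications are valid and of the same length.
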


\begin{proof}
According to lemma 3, $d=2^k$, for some $k$. Let us choose some $n>\nu_{2}(k)$, then $\nu_{2}(2^n-k)=\nu_{2}(k)$. Select some $l>n$ from lemma 1, then $(2^l-2^n)\vdots (2^n-k)$, so $(2^l-k)\vdots (2^n-k)$. As 
$\nu_{2}(2^l-k)=\nu_{2}(k)=\nu_{2}(2^n-k)$, the fraction $\frac{2^l-k}{2^n-k}$ is an odd number, which shows that 
$(2^{2^l-k}+1)\vdots (2^{2^n-k}+1)$, thereby after multiplying by $2^k$ we will get that $a_l\vdots a_n$. The lemma is proved.
\end{proof}
From lemma 4 it follows that $a_n=(a_n,a_l)\leq m$ for $n>\nu_2(k)$, which is a contradiction. So, our assumption was wrong and the theorem is now proved.
\end{proof}
\newpage

\end{document}